\def \Q {{\mathbb Q}}
\def \Z {{\mathbb Z}}
\def \K {{\mathbb K}}
\def \T {{\rm Tr}}
\newtheorem{theorem}{Theorem}
\newtheorem{lemma}[theorem]{Lemma}
\newtheorem{corollary}[theorem]{Corollary}
\def\cO{{\mathcal O}}
\def\00{{\bf 0}}
\def\11{{\bf 1}}
\def\+{\oplus}
\def \Q {{\mathbb Q}}
\def \Z {{\mathbb Z}}
\def \T {{\rm Tr}}
\newcommand{\BBR}{\mathbb{R}}
\newcommand\norm{{\mathcal N}}
\newcommand\ph\varphi
\newcommand{\Gal}{{\mathrm{Gal}}}
\begin{document}

\title{\bf\huge On a divisibility relation for Lucas sequences}



\author{\Large Yuri F. Bilu$^1$\footnote{Partially supported by the Max-Planck Institut fur Mathematik}, \stepcounter{footnote}\stepcounter{footnote}
Takao Komatsu$^2$\footnote{Partially supported by the Hubei provincial Expert Program in China.},
Florian Luca$^3$\and \Large   Amalia Pizarro-Madariaga$^4$, Pantelimon~St\u anic\u a$^5$\footnote{Also Associated to the {\em Institute of Mathematics ``Simion Stoilow'' of the
 Romanian Academy}, Bucharest, Romania}\vspace{.5cm}\\
 $^1$ IMB, Universit\'e Bordeaux 1 \& CNRS, 351 cours de la Lib\'eration,\\
33405 Talence France;\\
Email: {\tt yuri@math.u-bordeaux.fr}\\
 $^2$ School of Mathematics and Statistics, Wuhan University,\\
Wuhan 430072 China;\\
Email: {\tt komatsu@whu.edu.cn}\\
 $^3$ School of Mathematics, University of the Witwatersrand, \\
 Private Bag X3, Wits 2050, South Africa; \\
 Email: {\tt florian.luca@wits.ac.za}\\
 $^4$ Instituto de Matem\'aticas,\\
Universidad de Valparaiso, Chile;\\
Email: {\tt amalia.pizarro@uv.cl}\\
$^5$ Naval Postgraduate School,  Applied Mathematics Department,\\
 Monterey, CA 93943--5216, USA;\\
 Email: {\tt pstanica@nps.edu}}

\maketitle

\begin{abstract}
In this note, we study the divisibility relation $U_m\mid U_{n+k}^s-U_n^s$, where ${\bf U}:=\{U_n\}_{n\ge 0}$ is the Lucas sequence of characteristic polynomial $x^2-ax\pm 1$ and $k,m,n,s$ are  positive integers.
\end{abstract}
{\bf Keywords.} {Lucas sequence, roots of unity}\\
 {\bf Mathematics Subject Classification (2010).}   11B39

\section{Introduction}
\label{Int}

Let ${\bf U}:={\bf U}(a,b)=\{U_n\}_{n\ge 0}$ be the Lucas sequence given by $U_0=0,~U_1=1$ and
\begin{equation}
\label{eq:Un}
U_{n+2}=aU_{n+1}+bU_n\quad {\text{\rm for all}}\quad n\ge 0,\quad {\text{\rm where}}\quad b\in \{\pm 1\}.
\end{equation}
 Its characteristic equation is $x^2-ax-b=0$ with roots
\begin{equation}
\label{eq:alphabeta}
(\alpha,\beta)=\left(\frac{a+{\sqrt{a^2+4b}}}{2}, \frac{a-{\sqrt{a^2+4b}}}{2}\right).
\end{equation}
When $a\ge 1$, we have that $\alpha>1>|\beta|$.
We assume that $\Delta=a^2+4b>0$ and that $\alpha/\beta$ is not a root of unity. This only excludes the pairs $(a,b)\in\{(0,\pm 1),~(\pm 1,-1),~(2,-1)\}$ from the subsequent considerations. Here, we look at the relation
\begin{equation}
\label{eq:Untos}
U_m\mid U_{n+k}^s-U_n^s,
\end{equation}
with positive integers $k,~m,~n,~s$. Note that when $(a,b)=(1,1)$, then $U_n=F_n$ is the  $n$th Fibonacci number. Taking $k=1$ and using the relations
\begin{eqnarray*}
F_{n+1}-F_n & = & F_{n-1},\\
F_{n+1}+F_n & = & F_{n+2},\\
F_{n+1}^2+F_n^2 & = & F_{2n+1},
\end{eqnarray*}
it follows that relation \eqref{eq:Untos} holds with $s=1,2,4$, and $m=n-1,~n+1,~2n+1$, respectively. Further, in \cite{KLT}, the authors assumed that $m$ and $n$ are coprime positive integers. In this case, $F_n$ and $F_m$ are coprime,
so the rational number $F_{n+1}/F_n$ is defined modulo $F_m$. Then it was shown in \cite{KLT} that if this last congruence class above has multiplicative order $s$ modulo $F_m$ and $s\not\in \{1,2,4\}$, then
\begin{equation}
\label{eq:m}
m<500 s^2.
\end{equation}
In this paper, we study the general divisibility relation \eqref{eq:Untos} and prove the following result.

\begin{theorem}
\label{thm:main}
Let~$a$ be a non-zero integer, ${b\in \{\pm1\}}$, and~$k$ a positive integer.
Assume that ${(a,b)\notin \{(\pm 1,-1),~(\pm 2,-1)\}}$. Given a positive integer~$m$, let~$s$ be the smallest positive integer such that divisibility \eqref{eq:Untos} holds. Then either ${s\in \{1,2,4\}}$, or
\begin{equation}
\label{eq:mainthm_new}
m<20000 (sk)^2.
\end{equation}
\end{theorem}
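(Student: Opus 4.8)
The plan is to collapse the divisibility, via a single primitive prime divisor of $U_m$, to a congruence relating two roots of unity of orders $m$ and $s$, and then to exploit the rigidity of such a relation. I begin with the exact identities
\[
U_{n+k}-\alpha^k U_n=\beta^n U_k,\qquad U_{n+k}-\beta^k U_n=\alpha^n U_k,
\]
immediate from Binet's formula; their product is $U_{n+k}^2-V_kU_nU_{n+k}+(-b)^kU_n^2=(-b)^nU_k^2$ (with $V_k=\alpha^k+\beta^k$), and their ratio is $(U_{n+k}-\beta^kU_n)/(U_{n+k}-\alpha^kU_n)=(\alpha/\beta)^n$. For $m>30$ the Bilu--Hanrot--Voutier theorem gives a primitive prime $p\mid U_m$, so $\gamma:=\alpha/\beta$ has exact order $m$ in $\F_{p^2}^\times$; hence $m\mid p-\bigl(\tfrac{\Delta}{p}\bigr)$, giving $p\ge m-1$. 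Writing $z:=U_{n+k}/U_n$, minimality of $s$ means $z$ has exact order $s$ modulo $U_m$, hence modulo $p$, so $z\equiv\zeta\pmod{\mathfrak p}$ for a primitive $s$-th root of unity $\zeta\in\F_p$ (whence $s\mid p-1$). The identities then yield the key congruence, valid in $\F_{p^2}$,
\[
\gamma^{\,n}\equiv\frac{\zeta-\beta^k}{\zeta-\alpha^k}\pmod{\mathfrak p},
\]
whose left side has order exactly $m$ (the relevant coprimality of $n$ and $m$ following from $\gcd(U_n,U_m)=1$).

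The heart of the matter is that, modulo $\mathfrak p$, the quantities $\alpha,\beta$ are \emph{themselves} roots of unity: from $\gamma=-b\alpha^2$ one gets $\alpha^{2m}\equiv(-b)^m$, so $\alpha,\beta$ have order dividing $4m$. Thus the displayed congruence, after clearing denominators, becomes a four-term relation
\[
\beta^k+\zeta\gamma^{\,n}-\alpha^k\gamma^{\,n}-\zeta\equiv 0\pmod{\mathfrak p}
\]
among roots of unity of orders dividing $\mathrm{lcm}(4m,s)$. The plan is to show that, unless $\zeta$ has order dividing $4$ (precisely the cases $s\in\{1,2,4\}$), this relation is \emph{non-degenerate} and forces $m$ to be small. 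The exceptional values are exactly those for which the configuration collapses to a genuine Lucas identity admitting arbitrarily large $m$: for instance $U_{2n+1}=U_{n+1}^2+bU_n^2$ gives $z^2\equiv-b$, i.e.\ order dividing $4$; for $\zeta\in\{\pm1,\pm\sqrt{-1}\}$ the number $(\zeta-\beta^k)/(\zeta-\alpha^k)$ reduces to a monomial in $\alpha,\beta$ (hence to a power of $\gamma$), so the congruence imposes no bound. For $s\notin\{1,2,4\}$ one argues that $\zeta$ genuinely involves a new root of unity, so the four-term relation cannot hold "for free", and the admissible orders are constrained by the theory of vanishing (multiplicative) relations among roots of unity; the number of terms being at most four, together with the cyclotomic bounds controlling $\Phi_s$ against the minimal polynomial $X^2-V_kX+(-b)^k$ of $\alpha^k$, is what is expected to produce the quadratic dependence on $sk$.

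The main obstacle — and the reason the final bound is polynomial in $sk$ and, crucially, independent of $a$ — is controlling the exponent $n$ and converting the single congruence into an effective, metric-free inequality. Here two devices are used. First, $z(n)=U_{n+k}/U_n\bmod U_m$ is periodic in $n$ with period dividing $m$: since $M^m\equiv bU_{m-1}\,I\pmod{U_m}$ for the companion matrix $M$, the shift $n\mapsto n+m$ scales $U_{n+k}$ and $U_n$ by the same factor and fixes $z(n)$, so one may take $1\le n\le m$; one also has the reflection $z(m-n)\equiv V_k-z(n)\pmod{U_m}$, giving a second constraint to play against the first. Second, and this is the delicate step I expect to dominate the work, one must extract from the root-of-unity relation modulo $\mathfrak p$ a bound that does \emph{not} pass through a norm or resultant over a field whose degree or height grows with $m$ or with $\log\alpha$ (both of which would only give exponential estimates). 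The required input is an $\alpha$-free bound on how large the order $m$ of $\gamma^{\,n}$ can be while the four-term relation persists; combined with $m\le p+1$ this should yield $m<20000(sk)^2$. The finitely many $m\le 30$ without a primitive divisor, the degenerate cases ($p\mid U_k$, $\alpha^k\equiv\zeta\pmod{\mathfrak p}$, or $m\mid n$), and the two splitting types of $p$ in $\Q(\sqrt{\Delta})$ are then disposed of directly.
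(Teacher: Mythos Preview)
Your proposal contains a genuine gap at precisely the point you yourself flag as ``the delicate step.'' The four-term relation
\[
\beta^k+\zeta\gamma^{\,n}-\alpha^k\gamma^{\,n}-\zeta\equiv 0\pmod{\mathfrak p}
\]
lives in the residue field $\F_{p^2}$, and there \emph{every} nonzero element is a root of unity (of order dividing $p^2-1$). So ``a relation among roots of unity in $\F_{p^2}$'' carries no structural information whatsoever; the rigidity theorems for vanishing sums of roots of unity (Mann, Conway--Jones, etc.) are characteristic-zero phenomena and collapse completely in positive characteristic. You acknowledge that you need ``an $\alpha$-free bound on how large the order $m$ of $\gamma^n$ can be while the four-term relation persists,'' but you give no mechanism for producing such a bound, and none is apparent: since $p$ is chosen \emph{after} $m$ (as a primitive divisor of $U_m$), the ambient field $\F_{p^2}$ grows with $m$, so nothing is pinned down. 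There are also smaller slips: $z=U_{n+k}/U_n$ need not be defined modulo $U_m$ since $\gcd(U_n,U_m)=U_{\gcd(n,m)}$ is not assumed to be~$1$, and the assertion that minimality of $s$ forces $z$ to have exact order $s$ modulo the single prime $p$ (rather than merely modulo $U_m$) is unjustified.

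The paper proceeds quite differently, and the contrast is instructive. It works entirely in characteristic zero, in the ring $\cO_K$ with $K=\Q(\alpha,e^{2\pi i/s_1})$. One factors $U_{n+k}^s-U_n^s=\prod_{d\mid s}\Phi_d(U_{n+k},U_n)$ and splits $U_m$ as $U_m\mid D\cdot A\cdot E$, where $D$ collects primes dividing $6s$, $A$ the contribution from $d\in\{1,2,3,4,6\}$, and $E$ the rest. The key analytic device is Lemma~\ref{lem:1} (a pigeonhole/Dirichlet step) producing small integers $u,v$ with $|2mu+(2n+k)v|\le 9\sqrt{m}$ and $|v|\le 3\sqrt{m}$; raising the two basic congruences $\alpha^{2m}\equiv(-b)^m$ and $\alpha^{2n+k}\equiv(-b)^n\zeta\,\dfrac{\alpha^k-(-b)^k\bar\zeta}{\alpha^k-\zeta}$ (both taken modulo the ideal $\mathcal E_i$ of $\cO_K$, not modulo a rational prime) to these small exponents and multiplying, one obtains a nonzero algebraic integer of controlled archimedean size whose norm bounds $|\norm_{K/\Q}(\mathcal E_i)|$. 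Nonvanishing of this algebraic integer is exactly the content of the multiplicative-independence Lemma~\ref{lem:independence}, which is where the exceptional orders $1,2,4$ (and the sporadic case $(4,-1,1)$ with $v=6$) emerge. The resulting bound $E\le\alpha^{22ks\sqrt m}$, together with $A\le\alpha^{m/2+O(k\sqrt m)}$ and $D\le\alpha^{O(s+\log m)}$, is compared to $U_m\ge\alpha^{m-2}$, yielding $m\ll(sk)^2$. The crucial difference from your outline is that the congruences are exploited in $\cO_K$ to manufacture a \emph{small nonzero algebraic integer}, whose norm then does the work; no appeal to vanishing-sum theorems, and no reduction modulo a single rational prime, is made.
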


\section{Preliminary results}

We put ${\bf V}:={\bf V}(a,b)=\{V_n\}_{n\ge 0}$ for the Lucas companion of ${\bf U}$ which has initial values $V_0=2,~V_1=a$ and satisfies the same recurrence relation $V_{n+2}=a V_{n+1}+bV_n$ for all $n\ge 0$.
The Binet formulas for $U_n$ and $V_n$ are
\begin{equation}
\label{eq:Binet}
U_n=\frac{\alpha^n-\beta^n}{\alpha-\beta},\qquad  V_n=\alpha^n+\beta^n\qquad {\text{\rm for~all}}\qquad n\ge 0.
\end{equation}
The next result addresses the period of $\{U_n\}_{n\ge 0}$ modulo $U_m$, where $m\ge 1$ is fixed.

\begin{lemma}
\label{lem:period1}
The congruence
\begin{equation}
\label{eq:period}
U_{n+4m}\equiv U_n \pmod {U_m}
\end{equation}
holds for all $n\ge 0$, $m\geq 2$.
\end{lemma}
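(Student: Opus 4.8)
The plan is to reduce the claimed periodicity to a single multiplicative congruence and then verify that congruence using two classical Lucas-sequence identities. First I would record the addition formula
\begin{equation*}
U_{m+n}=U_{m}U_{n+1}+bU_{m-1}U_n,
\end{equation*}
which follows either from the Binet formulas \eqref{eq:Binet} or from raising the companion matrix $\left(\begin{smallmatrix} a & b\\ 1 & 0\end{smallmatrix}\right)$ to the power $m+n$ and comparing the lower-left entries. Reducing this modulo $U_m$ annihilates the first term and leaves
\begin{equation*}
U_{m+n}\equiv bU_{m-1}U_n \pmod{U_m},
\end{equation*}
so that a shift of the index by $m$ amounts, modulo $U_m$, to multiplication by the fixed factor $c:=bU_{m-1}$. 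Iterating this four times gives $U_{4m+n}\equiv c^4 U_n \pmod{U_m}$, and hence the lemma will follow once I show that $c^4\equiv 1\pmod{U_m}$.

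For the second step I would invoke the Catalan-type identity
\begin{equation*}
U_{m+1}U_{m-1}-U_m^2=-(-b)^{m-1},
\end{equation*}
again immediate from \eqref{eq:Binet} since $\alpha\beta=-b$. Working modulo $U_m$ and using $U_{m+1}=aU_m+bU_{m-1}\equiv bU_{m-1}$, this becomes $bU_{m-1}^2\equiv -(-b)^{m-1}\pmod{U_m}$. Multiplying by $b$ and using $b^2=1$ yields $U_{m-1}^2\equiv (-b)^m \pmod{U_m}$. Since $c^2=b^2U_{m-1}^2=U_{m-1}^2$, squaring gives
\begin{equation*}
c^4\equiv (-b)^{2m}=1 \pmod{U_m},
\end{equation*}
which is exactly what is needed.

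The argument is essentially a bookkeeping exercise built on two standard identities, so no single step is a serious obstacle; the only place demanding care is the tracking of signs and parities through the substitution $b\in\{\pm1\}$, where the relations $b^2=1$ and $(-b)^{2m}=1$ must be applied at the right moments. I would also note at the outset that the hypothesis $m\ge 2$, together with the standing non-degeneracy assumption that $\alpha/\beta$ is not a root of unity, guarantees $U_m\neq 0$, so that reduction modulo $U_m$ is meaningful; the case $m=1$ is vacuous since every integer is divisible by $U_1=1$.
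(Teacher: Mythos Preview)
Your argument is correct. You reduce the period-$4m$ claim to showing that the multiplier $c=bU_{m-1}$ satisfies $c^4\equiv 1\pmod{U_m}$, and you establish this via the addition formula and the Cassini-type identity; the sign bookkeeping is handled properly.

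The paper's proof takes a different and shorter route: it simply records the single identity
\[
U_{n+4m}-U_n = U_m V_m V_{n+2m},
\]
verified directly from the Binet formulas, which makes the divisibility by $U_m$ immediate. Compared with that one-line approach, yours is longer but more structural: by exhibiting the shift-by-$m$ map as multiplication by $c$ and showing $c^2\equiv(-b)^m$, you actually obtain the finer information that the period of $\{U_n\bmod U_m\}$ divides $2m$ whenever $(-b)^m=1$, not just $4m$. The paper's identity, on the other hand, extends without change to the companion sequence (since $V_{n+4m}-V_n$ factors the same way) and avoids any case analysis on $b$ or $m$.
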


\begin{proof}
This follows because of the identity
$$
U_{n+4m}-U_n=U_mV_m V_{n+2m},
$$
 which can be easily checked using the Binet formulas \eqref{eq:Binet}.
\end{proof}

The following is Lemma 1 in \cite{KLT}. It has also appeared in other places.

\begin{lemma}
\label{lem:1}
Let $X\ge 3$ be a real number. Let $a$ and $b$ be positive integers with $\max\{a,b\}\le X$. Then there exist integers $u,v$ not both zero with $\max\{|u|,|v|\}\le
{\sqrt{X}}$ such that $|au+bv|\le 3{\sqrt{X}}$.
\end{lemma}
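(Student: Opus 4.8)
The plan is to prove this by a Dirichlet box (pigeonhole) argument applied to a two-dimensional grid of lattice points. Set $N=\lfloor\sqrt{X}\rfloor$; since $X\ge 3$ we have $N\ge 1$, so the grid is nonempty. I would consider the $(N+1)^2$ integer pairs $(u',v')$ with $0\le u',v'\le N$ and, to each, associate the value $au'+bv'$, which lies in the interval $[0,(a+b)N]$.

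Next I would partition this target interval into $N(N+2)=(N+1)^2-1$ equal half-open subintervals, each of length $\ell=\frac{(a+b)N}{N(N+2)}=\frac{a+b}{N+2}$. Since the number of pairs, $(N+1)^2$, strictly exceeds the number of boxes, $N(N+2)$, the pigeonhole principle forces two distinct pairs $(u_1,v_1)\ne (u_2,v_2)$ to land in the same subinterval. Setting $u=u_1-u_2$ and $v=v_1-v_2$, these are integers that are not both zero, with $|u|,|v|\le N\le\sqrt{X}$, and $|au+bv|<\ell$ because the two associated values differ by less than the box length.

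It then remains to bound $\ell$. Using $a+b\le 2X$ together with $N+2>\sqrt{X}$ (which follows from $N>\sqrt{X}-1$), I would estimate $\ell=\frac{a+b}{N+2}<\frac{2X}{\sqrt{X}}=2\sqrt{X}\le 3\sqrt{X}$, which yields the claim with room to spare.

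The argument is short, and the only delicate point is the calibration of the box count: one must choose exactly $N(N+2)$ subintervals so that, on the one hand, the pigeonhole inequality $(N+1)^2>N(N+2)$ holds, and, on the other, the resulting box length stays below $2\sqrt{X}$ once the floor in $N=\lfloor\sqrt{X}\rfloor$ is accounted for. This floor bookkeeping, ensuring $N+2>\sqrt{X}$, is the main place where care is needed; the generous constant $3$ in the statement leaves ample slack, so no sharp estimate is required.
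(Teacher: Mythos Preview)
Your pigeonhole argument is correct and entirely standard; the only cosmetic wrinkle is that with all subintervals half-open the single value $(a+b)N$ at the right endpoint is not covered, but making the last box closed (so the difference is $\le\ell$ rather than $<\ell$) fixes this at no cost, since you already have $\ell<2\sqrt X\le 3\sqrt X$.

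As for comparison with the paper: there is nothing to compare. The paper does not prove this lemma at all; it simply quotes it as Lemma~1 of~\cite{KLT} and remarks that it has appeared elsewhere. Your self-contained Dirichlet box argument is exactly the kind of proof one would expect to find in such a reference.
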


The following lemma is well-known, but we include the proof for the reader's convenience. In what follows, a unit means Dirichlet unit, that is an algebraic integer $\eta$ such that $\eta^{-1}$ is also an algebraic integer.

\begin{lemma}
\label{lem:v_new}
Let ${v>1}$ be an integer and~$\zeta$ be a primitive $v$th root of unity. Then
\begin{equation}
\label{eq:v_new}
\prod_{\gcd(k,v)=1}(1-\zeta^k)=
\begin{cases}
p, & \text{if ${v=p^\ell}$ is a prime power},\\
1, &\text{if $v$ has at least two distinct prime divisors},
\end{cases}
\end{equation}
the product being over the residue classes $\bmod~v$ coprime with~$v$. In particular, in the second case, ${1-\zeta}$ is a unit.
\end{lemma}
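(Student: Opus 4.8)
The plan is to recognize the displayed product as the value at $1$ of the $v$th cyclotomic polynomial and then evaluate that value. Since $\zeta$ is a primitive $v$th root of unity, the numbers $\zeta^k$ with $\gcd(k,v)=1$ run exactly over all primitive $v$th roots of unity, so
$$
\prod_{\gcd(k,v)=1}(1-\zeta^k)=\Phi_v(1),\qquad\text{where}\qquad\Phi_v(x)=\prod_{\gcd(k,v)=1}(x-\zeta^k)
$$
is the $v$th cyclotomic polynomial, a monic polynomial with integer coefficients. The whole problem thus reduces to computing $\Phi_v(1)$, and the final ``unit'' assertion will follow from a norm computation.

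The key tool I would use is the factorization $x^v-1=\prod_{d\mid v}\Phi_d(x)$. As $\Phi_1(x)=x-1$, dividing gives $1+x+\cdots+x^{v-1}=\prod_{d\mid v,\,d>1}\Phi_d(x)$, and substituting $x=1$ yields the clean identity $v=\prod_{d\mid v,\,d>1}\Phi_d(1)$. From here I would argue by strong induction on $v$, treating the two cases of the statement simultaneously. If $v=p^\ell$ is a prime power, the divisors $d>1$ are $p,p^2,\dots,p^\ell$, and the inductive hypothesis gives $\Phi_{p^j}(1)=p$ for $j<\ell$; then $p^\ell=\Phi_{p^\ell}(1)\cdot p^{\ell-1}$ forces $\Phi_{p^\ell}(1)=p$. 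If instead $v=\prod_i p_i^{a_i}$ has at least two distinct prime factors, I would split the proper divisors $d>1$ of $v$ into prime powers and non--prime--powers: by induction the latter contribute factor $1$, while the prime--power divisors $p_i^{1},\dots,p_i^{a_i}$ each contribute $p_i$, so the product over proper divisors is exactly $\prod_i p_i^{a_i}=v$. Comparing with $v=\Phi_v(1)\cdot\prod_{d\mid v,\,1<d<v}\Phi_d(1)$ then gives $\Phi_v(1)=1$.

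For the final assertion, suppose $v$ has at least two distinct prime divisors, so $\prod_{\gcd(k,v)=1}(1-\zeta^k)=1$. This product is precisely the field norm $N_{\Q(\zeta)/\Q}(1-\zeta)$, because the Galois group of $\Q(\zeta)/\Q$ permutes the primitive $v$th roots of unity transitively and hence sends $1-\zeta$ to the various $1-\zeta^k$. Since $1-\zeta$ is an algebraic integer with norm $\pm1$, it is a Dirichlet unit; concretely, its inverse equals the product of the remaining factors $1-\zeta^k$ with $k\not\equiv1$, each of which is an algebraic integer, so $(1-\zeta)^{-1}$ is an algebraic integer as well.

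I expect no serious obstacle here, as this is a classical evaluation; the only point requiring care is the bookkeeping in the inductive step, namely correctly partitioning the divisors $d>1$ of $v$ into prime powers (which contribute their underlying prime) and divisors with at least two prime factors (which contribute $1$), and verifying that these contributions multiply back to $v$ so that the factor $\Phi_v(1)$ is pinned down to $1$.
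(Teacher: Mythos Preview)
Your proof is correct and takes a somewhat different route from the paper's. Both begin by identifying the product as $\Phi_v(1)$. For the prime-power case the paper simply writes $\Phi_{p^\ell}(X)=(X^{p^\ell}-1)/(X^{p^{\ell-1}}-1)$ explicitly and evaluates at $X=1$. For the composite case the paper does \emph{not} compute $\Phi_v(1)$ by induction; instead it observes that if $p,p'$ are two distinct primes dividing $v$, then $(1-\zeta)\mid(1-\zeta^{v/p})\mid p$ and likewise $(1-\zeta)\mid p'$, whence $(1-\zeta)\mid 1$ is a unit, and only then concludes that the norm on the left of the displayed formula equals $1$ (using positivity to fix the sign). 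Your uniform induction via the identity $v=\prod_{d\mid v,\,d>1}\Phi_d(1)$ is arguably tidier and treats both cases in one stroke; the paper's divisibility trick, on the other hand, yields the unit assertion directly---which is really the part used later---and gets the value $\Phi_v(1)=1$ only as a byproduct.
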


\begin{proof}
The product on the left of~\eqref{eq:v_new} is $\Phi_v(1)$, where $\Phi_v(X)$ denotes the $v$th cyclotomic polynomial. For ${v=p^\ell}$ we have
$$
\Phi_{p^\ell}(X)= \frac{X^{p^\ell}-1}{X^{p^{\ell-1}}-1}= X^{p^{\ell-1}(p-2)}+X^{p^{\ell-1}(p-1)}+\cdots+X^{p^{\ell-1}}+1,
$$
and ${\Phi_{p^\ell}(1)=p}$, proving the prime power case. In particular, ${(1-\zeta)\mid p}$ in this case.

Now assume that~$v$ is divisible by two distinct primes~$p$ and~$p'$. Then $\zeta^{v/p}$ is a primitive root of unity of order~$p$, which implies that in the ring $\Z[\zeta]$ we have ${(1-\zeta)\mid(1-\zeta^{v/p})\mid p}$. Similarly, ${(1-\zeta)\mid p'}$. The divisibility relations ${(1-\zeta)\mid p}$ and ${(1-\zeta)\mid p'}$ imply that ${(1-\zeta)\mid 1}$, that is, ${1-\zeta}$ is a unit. Hence its ${\Q(\zeta)/\Q}$-norm is $\pm1$. Since it is obviously positive, it must be~$1$.  But this norm is exactly the left-hand side of~\eqref{eq:v_new}.
\end{proof}


This lemma has the following consequence, which is again   well-known, but we did not find a suitable reference.

\begin{corollary}
\label{cor:v_new}
\begin{enumerate}
\item
\label{item:zetaxi}
Assume that~$\zeta$ and~$\xi$ are roots of unity of coprime orders, and both distinct from~$1$. The ${\zeta-\xi}$ is a unit.

\item[] From now on~$m$ and~$n$ are positive integers and ${d=\gcd(m,n)}$.

\item
\label{item:mn}
In $\Z[x]$ we have the equality of ideals ${(x^m-1,x^n-1)=(x^d-1)}$.

\item
\label{item:gamma}
Let~$\gamma$ be an algebraic integer in some number field~$K$. Then we have the equality of  $\cO_K$-ideals
${(\gamma^m-1,\gamma^n-1)=(\gamma^d-1)}$.
\end{enumerate}
\end{corollary}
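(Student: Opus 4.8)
The plan is to treat the three parts in turn: I would derive the first directly from Lemma~\ref{lem:v_new}, establish the second by running the subtractive Euclidean algorithm inside $\Z[x]$, and obtain the third by specializing the polynomial identity of the second part at $x=\gamma$.

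For the first part, write $a$ and $b$ for the orders of $\zeta$ and $\xi$, so that $a,b>1$ and $\gcd(a,b)=1$. I would first check that $\eta:=\zeta\xi^{-1}$ is a primitive root of unity of order exactly $ab$: from $\eta^k=1$ one gets $\zeta^k=\xi^k$, and comparing the orders of the two sides together with $\gcd(a,b)=1$ forces $a\mid k$ and $b\mid k$, hence $ab\mid k$. Since $a$ and $b$ are coprime and both exceed~$1$, the integer $ab$ has at least two distinct prime divisors, so Lemma~\ref{lem:v_new} applies and $1-\eta$ is a unit. The identity $\zeta-\xi=-\xi(1-\eta)$ then exhibits $\zeta-\xi$ as a product of two units (recall that $\xi$, being a root of unity, is itself a unit), and the claim follows.

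For the second part, the inclusion $(x^m-1,x^n-1)\subseteq(x^d-1)$ is immediate, since $d\mid m$ and $d\mid n$ give $x^d-1\mid x^m-1$ and $x^d-1\mid x^n-1$ in $\Z[x]$. For the reverse inclusion I would argue by induction on $m+n$ using the key identity
\[
x^{m-n}-1=(x^m-1)-x^{m-n}(x^n-1)\qquad(m>n),
\]
which, combined with $x^m-1=x^{m-n}(x^n-1)+(x^{m-n}-1)$, shows that $(x^m-1,x^n-1)=(x^{m-n}-1,x^n-1)$. Since $\gcd(m-n,n)=\gcd(m,n)=d$ and the sum of exponents has strictly decreased, induction reduces to the case $m=n=d$, where the statement is trivial. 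The one point deserving care is that all the polynomials involved are monic, so this Euclidean reduction stays inside $\Z[x]$ and introduces no denominators; this is exactly what makes the statement true over $\Z$ and not merely over $\Q$.

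Finally, the third part follows by specialization. The second part produces $f,g\in\Z[x]$ with $x^d-1=f(x)(x^m-1)+g(x)(x^n-1)$; evaluating at $x=\gamma$ and noting that $f(\gamma),g(\gamma)\in\cO_K$ gives $\gamma^d-1\in(\gamma^m-1,\gamma^n-1)$. The opposite inclusion comes from substituting $x=\gamma$ into the divisibilities $x^d-1\mid x^m-1$ and $x^d-1\mid x^n-1$. Together these yield the asserted equality of $\cO_K$-ideals. None of the three parts presents a serious obstacle; the only places where attention is genuinely needed are the monicity observation in the second part and the order computation for $\eta$ in the first.
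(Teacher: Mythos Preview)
Your proof is correct. The first and third parts match the paper's argument in substance (you simply spell out the order computation for $\eta=\zeta\xi^{-1}$ that the paper leaves implicit).

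The genuine difference is in item~\ref{item:mn}. The paper proves it by first treating the coprime case ${d=1}$: there one must show ${1\in\bigl((x^m-1)/(x-1),(x^n-1)/(x-1)\bigr)}$, and the paper does this by computing the resultant of these two polynomials as a product of terms $\zeta-\xi$ with $\zeta,\xi$ nontrivial roots of unity of coprime orders, each of which is a unit by item~\ref{item:zetaxi}. The general case is then reduced to ${d=1}$ by working in $\Z[x^d]$. Your route instead runs the subtractive Euclidean algorithm directly via the identity ${x^m-1=x^{m-n}(x^n-1)+(x^{m-n}-1)}$, which is more elementary and makes item~\ref{item:mn} logically independent of item~\ref{item:zetaxi}. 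The paper's approach, by contrast, exhibits item~\ref{item:mn} as a consequence of item~\ref{item:zetaxi}, so the three parts form a chain; what you gain in simplicity you lose in unifying the corollary under the single input of Lemma~\ref{lem:v_new}.
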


\begin{proof}
Item~\ref{item:zetaxi} follows from the second assertion of Lemma~\ref{lem:v_new}.

In item~\ref{item:mn} it suffices to show that ${x^d-1\in (x^m-1,x^n-1)}$. In the case ${d=1}$ this reduces to showing that
\begin{equation}
\label{eq:res}
1\in \left(\frac{x^m-1}{x-1},\frac{x^n-1}{x-1}\right).
\end{equation}
The resultant of the polynomials $\frac{x^m-1}{x-1}$ and $\frac{x^n-1}{x-1}$ is the product of the factors of the form ${\zeta-\xi}$, where~$\zeta$ and~$\xi$ are roots of unity of orders  dividing~$m$ and~$n$, respectively, and none of~$\zeta,\xi$ is~$1$. If ${d=\gcd(m,n)=1}$, then each factor is a unit by item~\ref{item:zetaxi}. Hence, the resultant is a unit of~$\Z$, that is, $\pm1$, proving~\eqref{eq:res} in the case ${d=1}$.

The case of arbitrary~$d$ reduces to the case ${d=1}$. Indeed, by the latter, ${x^d-1}$ belongs to the ideal ${(x^m-1,x^n-1)}$ in the ring $\Z[x^d]$. Hence, the same is true in the ring $\Z[x]$.

Finally, item~\ref{item:gamma} is an immediate consequence of the previous item.
\end{proof}

We will use one simple property of cyclotomic polynomials. Recall that for a positive integer~$v$ we denote by $\Phi_v(X)$ the $v$th cyclotomic polynomial. Then for ${\alpha>1}$ we have the trivial estimate ${\Phi_v(\alpha)>(\alpha-1)^{\ph(v)}}$ (where $\ph(v)$ is, of course, the Euler totient). We will need a slightly sharper estimate.

\begin{lemma}
\label{lem:cyclo}
Let~$v$ be a positive integer and  ${\alpha>1}$ a real number. Then for ${v>1}$   we have
\begin{equation}
\label{eq:cyclo}
\Phi_v(\alpha)> \bigl(\alpha(\alpha-1)\bigr)^{\ph(v)/2}.
\end{equation}
\end{lemma}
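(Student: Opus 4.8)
The plan is to pass from the product over roots of unity to a power series and reduce the entire statement to the positivity of a single series with Ramanujan-sum coefficients. Writing $\Phi_v(\alpha)=\prod_\zeta(\alpha-\zeta)$ over the $\ph(v)$ primitive $v$th roots of unity and using $|\zeta/\alpha|=1/\alpha<1$, I would expand
\[
\log\Phi_v(\alpha)=\ph(v)\log\alpha+\sum_\zeta\log\!\left(1-\frac{\zeta}{\alpha}\right)=\ph(v)\log\alpha-\sum_{n\ge1}\frac{c_v(n)}{n\alpha^n},
\]
where $c_v(n)=\sum_\zeta\zeta^n$ is the Ramanujan sum (all series converge absolutely for $\alpha>1$, and $\Phi_v(\alpha)>0$ there). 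Since $\log(\alpha-1)=\log\alpha-\sum_{n\ge1}1/(n\alpha^n)$, taking logarithms shows that the desired inequality $\Phi_v(\alpha)>\bigl(\alpha(\alpha-1)\bigr)^{\ph(v)/2}$ is \emph{equivalent} to
\[
S:=\sum_{n\ge1}\frac{\ph(v)/2-c_v(n)}{n\alpha^n}>0 .
\]

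Next I would exploit that $c_v(n)$ depends only on $n\bmod v$. Grouping the terms of $S$ by residue classes gives $S=\sum_{r=1}^{v}b_r\,T_r$, where $b_r:=\ph(v)/2-c_v(r)$ and $T_r:=\sum_{n\equiv r\ (v),\,n\ge1}1/(n\alpha^n)$. Two facts drive the argument. First, a termwise comparison gives $T_1>T_2>\cdots>T_v>0$ for every $\alpha>1$. Second, and this is the key point, $b_r\ge0$ whenever $v\nmid r$, while $b_v=-\ph(v)/2$ for the class $r\equiv0$ (where $c_v(v)=\ph(v)$). To see $c_v(r)\le\ph(v)/2$ for $v\nmid r$, I would use the closed form $c_v(r)=\mu(v/g)\,\ph(v)/\ph(v/g)$ with $g=\gcd(r,v)<v$: if $\mu(v/g)\le0$ the bound is immediate, and if $\mu(v/g)=1$ then $v/g$ is squarefree with at least two prime factors, hence $v/g\ge6$ and $\ph(v/g)\ge2$, which forces $c_v(r)\le\ph(v)/2$.

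To finish I would bound the single negative class by the positive ones using monotonicity. Since $b_r\ge0$ and $T_r\ge T_v$ for $r<v$,
\[
S\ \ge\ T_v\sum_{r=1}^{v-1}b_r-\frac{\ph(v)}{2}T_v\ =\ T_v\!\left(\sum_{r=1}^{v-1}b_r-\frac{\ph(v)}{2}\right).
\]
The remaining sum is computed from the averaging identity $\sum_{r=1}^{v}c_v(r)=\sum_\zeta\sum_{r=1}^{v}\zeta^r=0$, each inner sum vanishing because $\zeta\ne1$; hence $\sum_{r=1}^{v-1}c_v(r)=-\ph(v)$ and $\sum_{r=1}^{v-1}b_r=(v-1)\ph(v)/2+\ph(v)=\ph(v)(v+1)/2$. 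Substituting yields $S\ge T_v\cdot\ph(v)\,v/2>0$, which is the claim.

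The main obstacle is conceptual rather than computational: the inequality is essentially sharp at $v=6$, where $\Phi_6(\alpha)=\alpha^2-\alpha+1$ only just exceeds $\alpha(\alpha-1)$. Consequently any estimate applied factor-by-factor to the conjugate-pair factorization $\Phi_v(\alpha)=\prod(\alpha^2-2\alpha\cos\theta_\zeta+1)$, or any bound using only the first moment $\sum_\zeta\cos\theta_\zeta=\mu(v)$, is too lossy, since the size of the product is governed by the full distribution of the angles. The Ramanujan-sum expansion is exactly what encodes that distribution, and the real work is concentrated in the uniform bound $c_v(r)\le\ph(v)/2$ for $v\nmid r$ together with the monotonicity $T_r\ge T_v$; once these are in hand, the averaging identity closes the argument with ample room.
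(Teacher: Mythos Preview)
Your argument is correct. The logarithmic expansion with Ramanujan sums, the uniform bound $c_v(r)\le\ph(v)/2$ for $v\nmid r$ (via the Hölder formula), the termwise monotonicity $T_1>\cdots>T_v$, and the averaging identity $\sum_{r=1}^v c_v(r)=0$ combine exactly as you describe to give $S\ge T_v\cdot \ph(v)\,v/2>0$.

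This is a genuinely different route from the paper's proof. The paper works multiplicatively from the M\"obius identity $\Phi_v(\alpha)=\prod_{d\mid v}(\alpha^d-1)^{\mu(v/d)}$, bounds each factor by $\alpha^{d\mu(v/d)}$ times a correction $(\alpha-1)/\alpha$ when $\mu(v/d)=1$, and observes that exactly $\tau^\ast(v)/2$ divisors carry each sign of~$\mu$; this yields $\Phi_v(\alpha)>\alpha^{\ph(v)}\bigl((\alpha-1)/\alpha\bigr)^{\tau^\ast(v)/2}$, after which the elementary inequality $\tau^\ast(v)\le\ph(v)$ finishes the job for $v\notin\{2,6\}$, with those two values checked by hand. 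Your analytic approach trades that divisor-counting and the small case check for the explicit Ramanujan-sum formula and a monotonicity argument; it is uniform in~$v$ and explains cleanly why $v=6$ is the extremal case (it is precisely where $c_v(r)=\ph(v)/2$ occurs, forcing $b_r=0$). The paper's proof is shorter and more elementary, while yours avoids any exceptional values and carries sharper quantitative information, since the final lower bound $S\ge T_v\,\ph(v)\,v/2$ grows with~$v$.
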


\begin{proof}
We use the identity
$$
\Phi_v(X)=\prod_{d\mid v}(X^d-1)^{\mu(v/d)},
$$
where $\mu(\cdot)$ is the M\"obius function. We have clearly
\begin{equation}
\label{eq:mu}
(\alpha^d-1)^{\mu(v/d)} \ge
\begin{cases}
\alpha^{d\mu(v/d)} , & \mu(v/d)=-1, \\
\alpha^{d\mu(v/d)}\frac{\alpha-1}\alpha , & \mu(v/d)=1.
\end{cases}
\end{equation}
Moreover:
\begin{itemize}
\item
denoting by $\tau^\ast(v)$  the number of square-free divisors of~$v$, we have, for ${v>1}$, exactly $\tau^\ast(v)/2$ divisors with ${\mu(v/d)=1}$ and exactly $\tau^\ast(v)/2$ divisors with ${\mu(v/d)=-1}$;
\item
inequality~\eqref{eq:mu} is strict for all   ${d\mid v}$ satisfying ${\mu(v/d)\ne 0}$, with at most one exception.
\end{itemize}
Hence, multiplying~\eqref{eq:mu} for all  $d\mid v$ with ${\mu(v/d)\ne 0}$, and using the identity ${\ph(v)=\sum_{d\mid v}d\mu(v/d)}$,  we obtain, for ${v>1}$, the lower estimate
\begin{equation}
\label{eq:taustar}
\Phi_v(\alpha)> \alpha^{\ph(v)}\left(\frac{\alpha-1}\alpha\right)^{\tau^\ast(v)/2}.
\end{equation}
For ${v\notin \{1,2,6\}}$,  we have ${\tau^\ast(v)\le \ph(v)}$, which  implies
$$
|\Phi_v(\alpha)|> \alpha^{\ph(v)}\left(\frac{\alpha-1}\alpha\right)^{\ph(v)/2}=\bigl(\alpha(\alpha-1)\bigr)^{\ph(v)/2},
$$
proving~\eqref{eq:cyclo} for ${v\notin \{1,2,6\}}$. And for ${v\in\{2,6\}}$, this is obviously true.
\end{proof}

The following lemma is the workhorse of our argument.

\begin{lemma}
\label{lem:independence}
Let~$a$,~$b$ and~$k$ be as in the statement of Theorem~\textup{\ref{thm:main}}, and assume in addition that $ {a\ge 1}$. Let ${v\ge 1}$ be an integer and~$\zeta$ a primitive $v$th root of unity. Define~$\alpha$ as in~\eqref{eq:alphabeta} and assume  that the numbers
\begin{equation}
\label{eq:2numbers}
\alpha\qquad {\text{\it and}}\qquad \frac{\alpha^k-(-b)^k \bar\zeta}{\alpha^k-\zeta}
\end{equation}
are multiplicatively dependent. Then we have one of the following options:
\begin{itemize}
\item[$(i)$] $(-b)^k=-1$, $v=4$;
\item[$(ii)$] $(a,b,k)\in\{(1,1,1),~(2,1,1)\}$ and $v\in \{1,2\}$;
\item[$(iii)$] $(-b)^k=1$, $v\in \{1,2\}$;
\item[$(iv)$] $(a,b,k)=(4,-1,1)$ and $v\in \{4,6\}$.
\end{itemize}
\end{lemma}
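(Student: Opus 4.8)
The plan is to use multiplicative dependence to pin down the archimedean absolute values of the second number in~\eqref{eq:2numbers}, and then to combine this with its being an algebraic integer. Set $\epsilon=(-b)^k\in\{\pm1\}$, $A=\alpha^k$, and write~$\eta$ for the second number in~\eqref{eq:2numbers}. Since $\alpha\beta=-b$ and $|b|=1$ we have $|\beta|=\alpha^{-1}$ and $\alpha^{-k}=\epsilon\beta^k$, so dividing numerator and denominator by $\alpha^k$ puts $\eta$ in the form $\eta=(1-\beta^k\bar\zeta)/(1-\epsilon\beta^k\zeta)$. Multiplicative dependence gives $\alpha^p\eta^q=1$ with $(p,q)\ne(0,0)$; since $\alpha>1$ we must have $q\ne0$, whence $\eta^q=\alpha^{-p}$ is a unit and so is~$\eta$, so all non-archimedean absolute values of~$\eta$ equal~$1$. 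For the archimedean ones I would run over the embeddings $\sigma$ of $K=\Q(\alpha,\zeta)$, each given by $\sigma(\alpha)\in\{\alpha,\beta\}$ and $\sigma(\zeta)=\zeta^j$ with $\gcd(j,v)=1$. A direct computation with~\eqref{eq:Binet} gives $|\sigma(\eta)|=1$ for all~$\sigma$ when $\epsilon=1$ (numerator and denominator being complex conjugates), whereas for $\epsilon=-1$ one gets $|\sigma(\eta)|^2=(A^2+2A\cos\phi_j+1)/(A^2-2A\cos\phi_j+1)$ on the embeddings fixing~$\alpha$, and its reciprocal on those sending $\alpha\mapsto\beta$, where $\zeta^j=e^{i\phi_j}$.

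Suppose first $\epsilon=1$ (every case with $b=-1$, and $b=1$ with~$k$ even). Then all archimedean absolute values of the algebraic integer~$\eta$ equal~$1$, so Kronecker's theorem makes~$\eta$ a root of unity. If $v\in\{1,2\}$ then~$\zeta$ is real and $\eta=1$, which is dependence with no constraint on $(a,b,k)$: this is option~$(iii)$. If $v\ge3$ then $\eta\ne1$, and solving $\eta=(A-\bar\zeta)/(A-\zeta)$ for~$A$ gives $A=(\bar\zeta-\eta\zeta)/(1-\eta)$, a quotient of differences of roots of unity. Using subadditivity of the Weil height and $h(\eta)=0$, I get $h(A)\le h(\bar\zeta-\eta\zeta)+h(1-\eta)\le2\log2$, while $h(A)=h(\alpha^k)=\tfrac{k}{2}\log\alpha$ because~$\alpha$ is a quadratic unit of Mahler measure~$\alpha$. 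Hence $\alpha^k\le16$, leaving finitely many triples; a finite check leaves only option~$(iv)$ (for instance $(a,b,k)=(4,-1,1)$ gives $\eta=\zeta_{12}$ for both $v=4$ and $v=6$).

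Now suppose $\epsilon=-1$, so $b=1$ and~$k$ is odd. Applying $\alpha^p\eta^q=1$ in each embedding gives $p\log\alpha+q\log|\sigma(\eta)|=0$ with the same $(p,q)$; since $q\ne0$, $|\sigma(\eta)|$ is constant over the embeddings fixing~$\alpha$. As $c\mapsto(A^2+2Ac+1)/(A^2-2Ac+1)$ is strictly increasing, all the cosines $\cos(2\pi t/v)$ with $\gcd(t,v)=1$ must coincide, forcing $\ph(v)\le2$, i.e. $v\in\{1,2,3,4,6\}$. For $v=4$ one computes $\eta=1$, giving option~$(i)$. For $v\in\{3,6\}$ one has $\zeta+\bar\zeta=\pm1$; if~$\eta$ were integral then $(A-\zeta)\mid(A+\bar\zeta)$, hence $(A-\zeta)\mid(\zeta+\bar\zeta)=\pm1$, making $A-\zeta$ a unit, contradicting $N_{K/\Q}(A-\zeta)=V_{2k}+1>1$; so these~$v$ are impossible. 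For $v\in\{1,2\}$ we have $\eta=(A\pm1)/(A\mp1)\in\Q(\sqrt\Delta)$, where any two units are multiplicatively dependent, so dependence is equivalent to~$\eta$ being a unit; since $N_{\Q(\sqrt\Delta)/\Q}(\alpha^k\mp1)=\mp V_k$, this forces $V_k\mid4$, which bounds the parameters and returns option~$(ii)$ after a finite check.

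The structural reductions—reducing to a unit, the Kronecker step, the monotonicity argument, and the height bound $\alpha^k\le16$—are clean, so the main obstacle is the ensuing finite casework. In particular, extracting precisely the sporadic triple $(4,-1,1)$ with $v\in\{4,6\}$ from the height bound when $\epsilon=1$, and verifying the non-integrality that eliminates $v\in\{3,6\}$ when $\epsilon=-1$, are the points that require genuine care.
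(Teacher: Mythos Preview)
Your split into the cases $\epsilon=1$ and $\epsilon=-1$ is a genuinely different route from the paper's, which works uniformly through the divisibility $(\alpha^k-\zeta)\mid(\zeta-\delta\bar\zeta)$ and a norm inequality (their~\eqref{eq:inequality}). Your $\epsilon=-1$ branch is nice: the monotonicity argument forcing $\ph(v)\le 2$ is clean, and the handling of $v\in\{3,6\}$ and $v\in\{1,2\}$ via the explicit norms is correct. One small point: when $\alpha\in\Q(\zeta_v)$, not every $t$ with $\gcd(t,v)=1$ comes from an embedding \emph{fixing} $\alpha$; but if you also use the embeddings with $\sigma(\alpha)=\beta$, a short computation shows that $f(\cos\phi_t)$ takes the \emph{same} value $\alpha^{-2p/q}$ in both cases, so the conclusion $\cos\phi_t$ constant survives. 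This is easy to patch.

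The real problem is in the $\epsilon=1$ branch. Your Kronecker step is fine and the height inequality $h(A)\le 2\log 2$, hence $\alpha^k\le 16$, is correct; but this only leaves finitely many triples $(a,b,k)$, not finitely many quadruples $(a,b,k,v)$. For each surviving $A=\alpha^k$ you must still decide, over \emph{all} $v\ge 3$, whether $(A-\bar\zeta)/(A-\zeta)$ is a root of unity, and nothing in your argument bounds $v$. The claim ``a finite check leaves only option~$(iv)$'' is therefore not justified as written. In fact the natural way to close this is precisely what the paper does: from $\eta$ a unit one gets $(A-\zeta)\mid(\zeta-\bar\zeta)$, and comparing norms (their~\eqref{eq:leftnorm}--\eqref{eq:inequality} together with Lemma~\ref{lem:v_new}) pins $v$ down to a short explicit list. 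Without some such step your $\epsilon=1$ case is incomplete. If you want to keep your height idea, you could combine it with the divisibility $(A-\zeta)\mid(\zeta-\bar\zeta)$: then for each of your finitely many $A$ the norm $N_{\Q(\alpha,\zeta)/\Q}(A-\zeta)$ grows with $\ph(v)$ while $N(\zeta-\bar\zeta)$ is controlled by Lemma~\ref{lem:v_new}, which does give a bound on $v$.
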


\begin{proof}
We use the notation
$$
K=\Q(\alpha), \quad L=\Q(\zeta), \quad M=\Q(\alpha,\zeta), \quad \alpha_1=\alpha^k, \quad \delta=(-b)^k.
$$
Note that $\delta\alpha_1^{-1}=\beta^k$ is the Galois conjugate of $\alpha_1$.

Recall that we disregard the cases ${(a,b)\in \{(1,-1),(2,-1)\}}$. In addition to this, we will disregard the case ${(a,b,k)=(1,1,1)}$, because it is settled in Lemma~2 of~\cite{KLT}.  This implies that
\begin{equation}
\label{eq:lower}
\alpha_1\ge 1+\sqrt2.
\end{equation}
When ${\delta= 1}$ we can say more:
\begin{equation}
\label{eq:lowersharp}
\alpha_1\in \left\{ \frac{3+\sqrt5}2, 2+\sqrt3\right\} \quad \text{or}\quad \alpha_1\ge\frac{5+\sqrt{21}}2 .
\end{equation}
We will also assume that we are not in one of the instances~$(i)$,~$(iii)$ above; this is equivalent to saying that
\begin{equation}
\label{eq:nzero}
\zeta^2\ne \delta.
\end{equation}

Since the numbers~\eqref{eq:2numbers} are multiplicatively dependent, then the second of these numbers must be a unit (because the first is). In particular,
$$
{(\alpha_1-\zeta)\mid (\alpha_1-\delta \bar\zeta)}
$$
in the ring $\cO_M$, which implies that
\begin{equation}
\label{eq:divides}
(\alpha_1-\zeta)\mid(\zeta-\delta\bar\zeta).
\end{equation}
This divisibility relation is very restrictive: we will see that is satisfied in very few cases, which can be verified by inspection.

We first show the following   identity for the norm of $\alpha_1-\zeta$:
\begin{equation}
\label{eq:leftnorm}
|\norm_{M/\Q}(\alpha_1-\zeta)|=\bigl(\alpha_1^{-\ph(v)}\Phi_v(\alpha_1)\Phi_{v^\ast}(\alpha_1)\bigr)^{[M:L]/2},
\end{equation}
where $\Phi_v(X)$ is the $v$th cyclotomic polynomial and
\begin{equation}
\label{eq:*}
v^\ast=
\begin{cases}
v & \text{if $4\mid v$ or $\delta=1$},\\
v/2& \text{if $2\,\|\,v$ and $\delta=-1$},\\
2v& \text{if $2\nmid v$ and $\delta=-1$}.
\end{cases}
\end{equation}
Note that
$$
\ph(v^\ast)=\ph(v), \qquad \Phi_{v^\ast}(X)=\pm \Phi_v(\delta X), \qquad \Phi_v(X^{-1})=\pm X^{-\ph(v)}\Phi_v(X),
$$
the sign in last identity being ``$+$'' for ${v>1}$ and the sign in the middle identity being ``$+$" if $\delta=1$ or  $\min\{v,v^*\}>1$.

Let us prove~\eqref{eq:leftnorm}.  When ${\alpha \notin L}$, the conjugates of ${\alpha_1-\zeta}$ are the $2\ph(v)$ numbers ${\alpha_1-\zeta'}$ and ${\delta\alpha_1^{-1}-\zeta''}$, where both~$\zeta'$ and~$\zeta''$ run through the set of primitive $v$th roots of unity. Hence, in this case
$$
|\norm_{M/\Q}(\alpha_1-\zeta)|=|\Phi_v(\alpha_1)\Phi_v(\delta\alpha_1^{-1})|= \alpha_1^{-\ph(v)}\Phi_v(\alpha_1)\Phi_{v^\ast}(\alpha_1),
$$
which is~\eqref{eq:leftnorm} in the case ${\alpha \notin L}$.

Now assume that ${\alpha \in L}$, and set
$$
G=\Gal(L/\Q), \qquad H=\Gal(L/K),
$$
for the Galois groups of the various extensions.
The group~$H$ is a subgroup of~$G$ of index~$2$,
and we have
$$
\alpha_1^\sigma=
\begin{cases}
\alpha_1, & \sigma \in H,\\
\delta\alpha_1^{-1}, &\sigma \in G\smallsetminus H.
\end{cases}
$$
Hence,
\begin{align*}
|\norm_{M/\Q}(\alpha_1-\zeta)|& =|\norm_{L/\Q}(\alpha_1-\zeta)|\\
&=\prod_{\sigma\in H}|\alpha_1-\zeta^\sigma|\prod_{\sigma\in G\smallsetminus H}|\delta\alpha_1^{-1}-\zeta^{\sigma}|\\
&=\alpha_1^{-\ph(v)/2}\prod_{\sigma\in H}|\alpha_1-\zeta^\sigma|\prod_{\sigma\in G\smallsetminus H}|\delta\alpha_1-\zeta^{\sigma}|,
\end{align*}
where in the second equality we used ${\alpha_1\in \BBR}$.
In a similar fashion,
\begin{align*}
|\norm_{M/\Q}(\alpha_1-\delta\bar\zeta)| &=\prod_{\sigma\in H}|\alpha_1-\delta\bar\zeta^\sigma|\prod_{\sigma'\in G\smallsetminus H}|\delta\alpha_1^{-1}-\delta\bar\zeta^{\sigma}|\\
&=\alpha_1^{-\ph(v)/2}\prod_{\sigma\in H}|\delta\alpha_1-\zeta^\sigma|\prod_{\sigma\in G\smallsetminus H}|\alpha_1-\zeta^{\sigma}|.
\end{align*}
Since ${\frac{\alpha_1-\delta\bar\zeta}{\alpha_1-\zeta}}$ is a unit, the two norms computed above are equal. Hence,
\begin{align*}
|\norm_{M/\Q}(\alpha_1-\zeta)|^2& =|\norm_{M/\Q}(\alpha_1-\zeta)\norm_{M/\Q}(\alpha_1-\delta\bar\zeta)|\\
&=\alpha_1^{-\ph(v)}\prod_{\sigma\in G}|\alpha_1-\zeta^\sigma|\prod_{\sigma\in G}|\delta\alpha_1-\zeta^{\sigma}|\\
&=\alpha_1^{-\ph(v)}\Phi_v(\alpha_1)\Phi_{v^\ast}(\alpha_1),
\end{align*}
which proves~\eqref{eq:leftnorm} in the case ${\alpha \in L}$ as well.

Combining~\eqref{eq:divides} and~\eqref{eq:leftnorm} and recalling~\eqref{eq:nzero}, we obtain  the inequality
\begin{equation}
\label{eq:inequality}
0<\alpha_1^{-\ph(v)/2}|\Phi_v(\alpha_1) \Phi_{v^\ast}(\alpha_1)|^{1/2} \le |\norm_{L/\Q}(1-\delta\zeta^2)|.
\end{equation}
This will be our basic tool.

Our next observation is that ${1-\delta\zeta^2}$ cannot be a unit. Indeed, if it is a unit, then the right-hand side of~\eqref{eq:inequality} is~$1$ and  $\min\{v,v^\ast\}>1$. Hence, applying Lemma~\ref{lem:cyclo}, we obtain
$$
\alpha_1^{-\ph(v)/2}\bigl(\alpha_1(\alpha_1-1)\bigr)^{\ph(v)/2}<1,
$$
which implies ${\alpha_1<2}$, contradicting~\eqref{eq:lower}.

Thus, ${1-\delta\zeta^2}$ is non-zero, but not a unit. Applying Lemma~\ref{lem:v_new}, we find that this is possible only in the following cases:
\begin{align}
\label{eq:pell}
v&=p^\ell, &\delta&=1,\\
\label{eq:twopell}
v&=2p^\ell, &\delta&=1,\\
\label{eq:twoell}
v&=2^\ell, &\ell&\ge3,\\
\label{eq:onetwofour}
v&\in\{1,2,4\}, &\delta&\ne \zeta^2,
\end{align}
where (here and below)~$\ell$ is a positive integer and~$p$ is an odd prime number. We study these cases separately.

In the case~\eqref{eq:pell}, we have
$$
{\Phi_v(X)=\Phi_{v^\ast}(X)=\frac{X^{p^\ell}-1}{X^{p^{\ell-1}}-1}}\qquad {\text{\rm and}}\qquad {\norm_{L/\Q}(1-\zeta^2)=p}
$$
by Lemma~\ref{lem:v_new}. We obtain
$$
\frac1{\alpha_1^{p^{\ell-1}(p-1)/2}}\frac{\alpha_1^{p^\ell}-1}{\alpha_1^{p^{\ell-1}}-1} \le p.
$$
The left-hand side is strictly bounded from below
by ${\alpha^{p^{\ell-1}(p-1)/2}}$, which gives
${\alpha_1^{p^{\ell-1}}< p^{\frac2{p-1}}}$.
Checking with~\eqref{eq:lowersharp} leaves the only option
$$
\alpha_1=\frac{3+\sqrt5}2, \qquad p^\ell=3,
$$
which is eliminated by direct verification.

In the case~\eqref{eq:twopell}, we have
$$
{\Phi_v(X)=\Phi_{v^\ast}(X)=\frac{X^{p^\ell}+1}{X^{p^{\ell-1}}+1}}\qquad {\text{\rm and}}\qquad {\norm_{L/\Q}(1-\zeta^2)=p}.
$$
We obtain
$$
\frac1{\alpha_1^{p^{\ell-1}(p-1)/2}}\frac{\alpha_1^{p^\ell}+1}{\alpha_1^{p^{\ell-1}}+1} \le p.
$$
From~\eqref{eq:lowersharp}, we deduce    ${\alpha_1^{p^{\ell-1}}+1\le 1.4\,\alpha_1^{p^{\ell-1}}}$, which  implies the inequality
${\alpha_1^{p^{\ell-1}}< (1.4\,p)^{\frac2{p-1}}}$.
Invoking again~\eqref{eq:lowersharp}, we are left with the three options
\begin{align}
\label{eq:bads}
\alpha_1&=\frac{3+\sqrt5}2, &p^\ell&\in \{3,5\}, \\
\label{eq:good}
\alpha_1&=2+\sqrt3,&p^\ell&=3.
\end{align}
The two cases in~\eqref{eq:bads} are eliminated by verification, while~\eqref{eq:good} leads to ${(a,b,k,v)=(4,-1,1,6)}$,  one of the two  instances in~($iv$).


In the case~\eqref{eq:twoell}, we have
$$
{\Phi_v(X)=\Phi_{v^\ast}(X)=\frac{X^{2^\ell}-1}{X^{2^{\ell-1}}-1}}\qquad {\text{\rm and}}\qquad {\norm_{L/\Q}(1-\delta\zeta^2)=4}.
$$
We obtain
$$
\frac1{\alpha_1^{2^{\ell-2}}}\frac{\alpha_1^{2^\ell}-1}{\alpha_1^{2^{\ell-1}}+1} \le 4,
$$
which implies ${\alpha_1^{2^{\ell-2}}\le 4}$. Since ${\ell\ge 3}$, this contradicts~\eqref{eq:lower}.

In the final case~\eqref{eq:onetwofour}, it more convenient to use the divisibility relation~\eqref{eq:divides} directly. If  ${v\in\{1,2\}}$, then ${\zeta^2=1}$ and ${\delta=-1}$.
Taking the norm in both sides of~\eqref{eq:divides}, we obtain
$$
\alpha_1-\alpha_1^{-1}=\T_{K/\Q}(\alpha_1)\mid 4.
$$
Together with ${\norm_{K/\Q}(\alpha_1)=\delta=-1}$ and inequality~\eqref{eq:lower}, this implies two possibilities:
\begin{equation}
\label{eq:options}
\alpha_1=1+\sqrt2, \quad \alpha_1=2+\sqrt3.
\end{equation}
The latter is disqualified by inspection. The former yields ${(a,b,k)=(2,1,1)}$, which is~$(ii)$.

In a similar fashion one treats  ${v=4}$. In this case ${\zeta^2=-1}$ and ${\delta=1}$, and, taking the norm in~\eqref{eq:divides}, we obtain
$$
(\alpha_1+\alpha_1^{-1})^2=(\T_{K/\Q}(\alpha_1))^2\mid 16.
$$
We again have one of the options~\eqref{eq:options}, but this time the former is eliminated by inspection, and  the latter leads to ${(a,b,k)=(4,-1,1)}$, the missing instance in~$(iv)$.
This completes the proof of the lemma.
\end{proof}

The following is a generalization of Lemma 4 from \cite{KLT}.

For a prime number $p$ and a nonzero integer $m$, we put $\nu_p(m)$ for the exponent of the prime $p$ in the factorization of $m$. For a  finite set of primes
${\mathcal S}$ and a positive integer $m$, we put
$$
m_{\mathcal S}=\prod_{p\in {\mathcal S}} p^{\nu_p(m)}
$$
for the largest divisor of $m$ whose prime factors are in ${\mathcal S}$. For any prime number $p$ we put $f_p$ for the index of appearance in the Lucas sequence $\{U_n\}_{n\ge 0}$, which is the minimal positive integer $k$ such that $p\mid U_{k}$.

\begin{lemma}
\label{lem:4}
Let $a\ge 1$. If ${\mathcal S}$ is any finite set of primes and $m$ is a positive integer, then
$$
(U_m)_{\mathcal S}\le \alpha^2 m\, {\text{\rm lcm}}[ U_{f_p}: p\in {\mathcal S}].
$$
\end{lemma}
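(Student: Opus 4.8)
The plan is to bound the $p$-adic valuation $\nu_p(U_m)$ for each prime $p\in\mathcal S$ individually, and then assemble these bounds into the claimed inequality. The starting point is the standard \emph{index of appearance} theory for Lucas sequences: if $p\mid U_m$, then $f_p\mid m$, and moreover the full $p$-part of $U_m$ is controlled by $\nu_p(U_{f_p})$ together with $\nu_p(m)$. More precisely, for a prime $p$ one has the rank-of-apparition formula $\nu_p(U_m)=\nu_p(U_{f_p})+\nu_p(m/f_p)=\nu_p(U_{f_p})+\nu_p(m)$ whenever $f_p\mid m$ (with the usual minor modification when $p\mid\Delta$ or $p=2$). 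I would invoke this in the form $p^{\nu_p(U_m)}\le p^{\nu_p(U_{f_p})}\cdot p^{\nu_p(m)}$, so that the contribution of $p$ to $(U_m)_{\mathcal S}$ splits into a ``base'' factor dividing $U_{f_p}$ and a factor dividing $m$.

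The key step is then to multiply these local estimates over all $p\in\mathcal S$. Writing
$$
(U_m)_{\mathcal S}=\prod_{p\in\mathcal S}p^{\nu_p(U_m)}\le \prod_{p\in\mathcal S}p^{\nu_p(U_{f_p})}\cdot\prod_{p\in\mathcal S}p^{\nu_p(m)},
$$
I would recognize the second product as $m_{\mathcal S}\le m$, and bound the first product. Here the point is that each $p^{\nu_p(U_{f_p})}$ divides $U_{f_p}$, but summing the exponents over $p\in\mathcal S$ is \emph{not} the same as a product of the $U_{f_p}$; one must instead compare with $\operatorname{lcm}[U_{f_p}:p\in\mathcal S]$. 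The correct observation is that for each fixed $p$, the exponent $\nu_p\bigl(\operatorname{lcm}[U_{f_q}:q\in\mathcal S]\bigr)$ is at least $\max_{q\in\mathcal S}\nu_p(U_{f_q})\ge \nu_p(U_{f_p})$, since $p\mid U_{f_p}$ forces that index to appear; this gives $\prod_{p\in\mathcal S}p^{\nu_p(U_{f_p})}\le \operatorname{lcm}[U_{f_p}:p\in\mathcal S]$ directly.

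The remaining factor $\alpha^2$ absorbs the exceptional primes, namely $p=2$ and primes dividing the discriminant $\Delta=a^2+4b$, where the clean valuation formula $\nu_p(U_m)=\nu_p(U_{f_p})+\nu_p(m)$ acquires an extra summand. The standard sharpening of the apparition law shows that this extra contribution is bounded by $\nu_p(V_{\,\cdot})$ or by a bounded power, and using the Binet bound $U_{f_p}\le \alpha^{f_p}/(\alpha-\beta)$ together with $|V_n|\le 2\alpha^n$ one checks that the aggregate correction is at most $\alpha^2$. I expect the main obstacle to be exactly this bookkeeping at the anomalous primes: one must verify that the doubling/extra-factor phenomenon (e.g.\ when $p=2$ or when $p\mid\Delta$, where $U_{2m}=U_mV_m$ can introduce a factor of $V_m$) is uniformly absorbed into the single factor $\alpha^2 m$, rather than accumulating across $\mathcal S$. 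Handling this carefully — and confirming that the $\operatorname{lcm}$ on the right genuinely dominates the product of prime powers without overcounting — is the crux; the rest is routine valuation arithmetic.
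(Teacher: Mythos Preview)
Your proposal is correct and follows essentially the same route as the paper: bound $\nu_p(U_m)\le \nu_p(U_{f_p})+\nu_p(m)+\delta_{p,2}$, multiply over $p\in\mathcal S$, and absorb the anomalous contribution into $\alpha^2$. One small correction: odd primes dividing $\Delta$ require no special treatment here---the only exceptional case is $p=2$ with $a$ odd, where the extra factor is exactly $2^{\nu_2((a^2+3b)/2)}\le (a^2+3b)/2=(\alpha^2-\alpha\beta+\beta^2)/2<\alpha^2$.
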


\begin{proof}
It is known that
$$
\nu_p(U_m)=\left\{
\begin{matrix}
0 & {\text{\rm if}} & m\not\equiv 0\pmod {f_p}; & \\
\nu_p(U_{f_p})+\nu_p(m/f_p) & {\text{\rm if}} & m\equiv 0\pmod {f_p}, & p~{\text{\rm odd}};\\
\nu_2(U_2)+\nu_2(m/2) & {\text{\rm if}} & m\equiv 0\pmod {2}, & p=2,~a~{\text{\rm even}};\\
\nu_2(U_3) & {\text{\rm if}} & m\equiv 3\pmod 6, & p=2,~a~{\text{\rm odd}};\\
\nu_2(U_6)+\nu_2(m/2) & {\text{\rm if}} & m\equiv 0\pmod 6, & p=2,~a~{\text{\rm odd}}.
\end{matrix}
\right.
$$
The above relations follow easily from Proposition 2.1 in \cite{BHV}. In particular, the inequality
$$
\nu_p(U_m)\le \nu_p(U_{f_p})+\nu_p(m)+\delta_{p,2}
$$
always holds with $\delta_{p,2}$ being $0$ if $p$ is odd or $p=2$ and $a$ is even and $\nu_2((a^2+3b)/2)$ if $p=2$ and $a$ is odd. We get that
\begin{eqnarray*}
(U_m)_{\mathcal S} & \le &  \left(\prod_{p\in {\mathcal S}} p^{\nu_p(U_{f_p})}\right)\left(\prod_{\substack{p\mid m\\ p>2}} p^{\nu_p(m)}\right) 2^{\nu_2(m)+\nu_2((a^2+3b)/2}
\\
& < & \alpha^2 m\, {\text{\rm lcm}} [U_{f_p}: p\in {\mathcal S}],
\end{eqnarray*}
which is what we wanted to prove. For the last inequality above, we used the fact that $2^{\nu_2((a^2+3b)/2)}\le (a^2+3b)/2=(\alpha^2-\alpha\beta+\beta^2)/2< \alpha^2$.
\end{proof}

\section{Proof of Theorem \ref{thm:main}}

We assume that $m\ge 10000k$. Since $U_n$ is periodic modulo $U_m$ with period $4m$ (Lemma \ref{lem:period1}), we may assume that $n\le 4m$.
We split $U_m$ into various factors, as follows. Write
$$
U_{n+k}^s-U_n^s=\prod_{d\mid s} \Phi_d(U_{n+k},U_n), 
$$
where $\Phi_d(X,Y)$ is the homogenization of the cyclotomic polynomial $\Phi_d(X)$. We put $s_1:={\text{\rm lcm}}[2,s]$, ${\mathcal S}:=\{p: p\mid 6s\}$ and
\begin{eqnarray*}
D & := & (U_m)_{\mathcal S};\\
A & : = & \gcd(U_m/D, \prod_{d\le 6, ~d\ne 5} \Phi_d(U_{n+k},U_n);\\
E & := & \gcd(U_m/D, \prod_{\substack{d\mid s_1\\ d=5~{\text{\rm or}}~d>6}} \Phi_d(U_{n+k},U_n).
\end{eqnarray*}
Clearly,
$$
U_m\mid ADE.
$$
Before bounding $A,~D,~E$, let us  comment on the sign of $a$. If $a<0$, then we change the pair $(a,b)$ to $(-a,b)$. This has as effect
replacing $(\alpha,\beta)$ by $(-\alpha,-\beta)$ and so $U_n(\alpha,\beta)=(-1)^{n-1} U_n(\alpha,\beta)$ for all $n\ge 0$.  In particular, $U_m$ remains the same or changes sign.
Further, if $k$ is even then
$$
\Phi_d(U_{n+k}(-\alpha,-\beta),U_n(-\alpha,-\beta))=\pm \Phi_d(U_{n+k}(\alpha,\beta), U_n(\alpha,\beta)),
$$
while if $k$ is odd, then
\begin{eqnarray*}
\Phi_d(U_{n+k}(-\alpha,-\beta),U_n(-\alpha,-\beta)) & = & \pm \Phi_d(U_{n+k}(\alpha,\beta),-U_n(\alpha,\beta))\\
& = & \pm \Phi_{d^\ast}(U_{n+k}(\alpha,\beta),U_n(\alpha,\beta)),
\end{eqnarray*}
where the $d^\ast$ has been defined at \eqref{eq:*}. Note that the sets $\{d\le 6, ~d\ne 5\}$ and $\{d\mid s_1,~d=5~{\text{\rm or}}~d>6\}$ are closed under the operation $d\mapsto d^\ast$.
Hence, $D,~A,~E$ do not change if we replace $a$ by $-a$, so we assume that $a>0$. By the Binet formula \eqref{eq:Binet}, we get easily that the inequality
\begin{equation}
\label{eq:lowup}
\alpha^{n-2}\le U_n\le \alpha^{n}\quad {\text{\rm is~valid~for~all}}\quad n\ge 1.
\end{equation}
We are now ready to bound $A,~D,~E$.

The easiest to bound is $D$.  Namely, by Lemma \ref{lem:4} and the fact that  $f_p\le p+1$ for all $p\mid 6s$, we get
\begin{equation}
\label{eq:boundforD}
D\le \alpha^2 m  \prod_{p\mid 6s} U_{p+1}<m \alpha^{2+\sum_{p\mid 6s} (p+1)}<\alpha^{6s+3+\log m/\log \alpha},
\end{equation}
where we used the fact that $\sum_{p\mid t} (p+1)\le t+1$, which is easily proved by induction on the number of distinct prime factors of $t$.

\medskip

We next bound $E$.

\medskip

Note that
\begin{equation}
\label{eq:E}
E\mid \prod_{\substack{\zeta: \zeta^{s_1}=1\\ \zeta\not\in \{\pm 1,\pm i,\pm \omega,\pm \omega^2\}}} (U_{n+k}-\zeta U_n),
\end{equation}
where $\omega:=e^{2\pi i/3}$ is a primitive root of unity of order $3$.

\medskip

Let $K=\Q(e^{2\pi i/s_1}, \alpha)$, which is a number field of degree $d\le 2\phi(s_1)=2\phi(s)$.
Assume that there are $\ell$ roots of unity $\zeta$ participating in the product appearing in the right--hand side of \eqref{eq:E} and label them $\zeta_1,\ldots,\zeta_{\ell}$.
Write
\begin{equation}
\label{eq:rel}
{\mathcal E}_i=\gcd(E,U_{n+k}-\zeta_i U_n)\quad {\text{for~all}}\quad i=1,\ldots,\ell,
\end{equation}
where ${\mathcal E}_i$ are ideals in ${\mathcal O}_{K}$. Then relations \eqref{eq:E} and \eqref{eq:rel} tell us that
\begin{equation}
\label{eq:prod}
E{\mathcal O}_{K}\mid \prod_{i=1}^{\ell} {\mathcal E}_i.
\end{equation}
Our next goal is to bound the norm $|\norm_{K/\Q}({\mathcal E}_i)|$ of ${\mathcal E}_i$ for $i=1,\ldots,\ell.$ First of all, $U_m\in {\mathcal E_i}$. Thus, with formula \eqref{eq:Binet}
and the fact that $\beta=(-b)\alpha^{-1}$, we get
$$
\alpha^m\equiv (-b)^m \alpha^{-m}\pmod {{\mathcal E}_i}.
$$
Multiplying the above congruence by $\alpha^m$, we get
\begin{equation}
\label{eq:T1}
\alpha^{2m}\equiv (-b)^m \pmod {{\mathcal E}_i}.
\end{equation}
We next use formulae \eqref{eq:Binet} and \eqref{eq:rel} to deduce that
$$
(\alpha^{n+k}-(-b)^{n+k} \alpha^{-n-k})-\zeta(\alpha^n-(-b)^n \alpha^{-n})\equiv 0\pmod {{\mathcal E}_i},\quad (\zeta:=\zeta_i).
$$
Multiplying both sides above by $\alpha^n$, we get
\begin{equation}
\label{eq:rel2}
\alpha^{2n}(\alpha^k-\zeta)-(-b)^{n+k}(\alpha^{-k}-(-b)^k\zeta)\equiv 0\pmod {{\mathcal E}_i}.
\end{equation}
Let us show that $\alpha^k-\zeta$ and ${\mathcal E}_i$ are coprime. Assume this is not so and let $\pi$ be some prime ideal of ${\mathcal O}_{\K}$ dividing both $\alpha^k-\zeta$ and ${\mathcal E}_i$.
Then we get $\alpha^k\equiv \zeta\pmod \pi$ and so $\alpha^{-k}\equiv (-b)^k \zeta\pmod \pi$ by \eqref{eq:rel2}.
Multiplying these two congruences we get $1\equiv (-b)^k \zeta^2\pmod \pi$. Hence, $\pi\mid 1-(-b)^k \zeta^2$. If this number is not zero, then, $(-b)^k \zeta^2$ is a root of unity whose order divides $6s$, so, by Lemma \ref{lem:cyclo}, we get that $\pi\mid 6s$, which is
impossible because $\pi\mid {\mathcal E}_i\mid E$, and $E$ is an integer coprime to $6s$. If the above number is zero, we get that $\zeta^2=\pm 1$, so $\zeta\in \{\pm 1,\pm i\}$, but these values are excluded at this step. Thus, indeed $\alpha^k-\zeta$ and ${\mathcal E}_i$ are coprime, so $\alpha^k-\zeta$ is invertible modulo ${\mathcal E}_i$.
Now congruence \eqref{eq:rel2} shows that
\begin{equation}
\label{eq:12}
\alpha^{2n+k}\equiv (-b)^{n} \zeta \left(\frac{\alpha^{k}-(-b)^k {\overline{\zeta}}}{\alpha^k-\zeta}\right)\pmod {{\mathcal E}_i}.
\end{equation}
We now apply Lemma \ref{lem:1} to $a=2m$ and $b=2n+k\le 8m +k<9m$ with the choice $X=9m$ to deduce that there exist integers $u$, $v$ not both zero with
$\max\{|u|,|v|\}\le {\sqrt{X}}$ such that $|2mu+(2n+k)v|\le 3{\sqrt{X}}.$ We raise congruence \eqref{eq:T1} to $u$ and congruence \eqref{eq:12} to
$v$ and multiply the resulting congruences getting
$$
\alpha^{2mu+(2n+k)v}=(-b)^{mu+nv} \zeta^{v} \left(\frac{\alpha^k-(-b)^k{\overline{\zeta}}}{\alpha^k-\zeta}\right)^v \pmod {{\mathcal E}_i}.
$$
We record this as
\begin{equation}
\label{eq:T3}
\alpha^{R}\equiv \eta \left(\frac{\alpha^k-(-b)^k{\overline{\zeta}}}{\alpha^k-\zeta}\right)^S\pmod {{\mathcal E}_i}
\end{equation}
for suitable roots of unity $\eta$ and $\zeta$ of order dividing $s_1$ with $\zeta$ not of order $1,2,3,4$, or $6$,
where $R:=2mu+(2n+k)v$ and $S:=v$. We may assume that $R\ge 0$, for if not, we replace the pair $(u,v)$ by the pair $(-u,-v)$, thus replacing $(R,S)$ by $(-R,-S)$ and $\eta$ by $\eta^{-1}$ and leaving $\zeta$ unaffected. We may additionally assume that
$S\ge 0$, for if not, we replace $S$ by $-S$ and $\zeta$ by $(-b)^k {\overline{\zeta}}$, again a root of unity of order dividing $s_1$ but not of order $1,2,3,4,$ or $6$ and leave $R$ and $\eta$ unaffected. Thus, ${\mathcal E}_i$ divides the algebraic integer
\begin{equation}
\label{eq:alginteger}
E_i=\alpha^R (\alpha^k-\zeta_i)^{S}-\eta_i(\alpha^k-(-b)^k{\overline{\zeta_i}})^S.
\end{equation}
Let us show that $E_i\ne 0$. If $E_i=0$, we then get
$$
\alpha^R=\eta_i \left(\frac{\alpha-(-b)^k{\overline{\zeta_i}}}{\alpha-\zeta_i}\right)^S,
$$
and after raising both sides of the above equality to the power $s_1$, we get, since $\eta_i^{s_1}=1$, that
$$
\alpha^{s_1R}=\left(\frac{\alpha^k-(-b)^k{\overline{\zeta_i}}}{\alpha-\zeta_i}\right)^{Ss_1}.
$$
Lemma \ref{lem:independence} gives us a certain number of conditions  all of which have $\zeta_i$ or a root of unity of order $1,2,4,$ or $6$, which is not our case.
Thus, $E_i$ is not equal to zero.
We now bound the absolute values of the conjugates of $E_i$. We find it more convenient to work with the associate of $E_i$ given by
$$
G_i=\alpha^{-\lfloor R/2\rfloor} E_i= \alpha^{R-\lfloor R/2\rfloor}  (\alpha^k-\zeta_i)^{S}-\alpha^{-\lfloor R/2\rfloor}\eta_i(\alpha^k-(-b)^k{\overline{\zeta_i}})^S.
$$
Note that
$$
R\le |2m+(2n+k)v|\le 3{\sqrt{X}}=9{\sqrt{m}},\quad {\text{\rm and}}\quad S=|v|\le {\sqrt{X}}=3{\sqrt{m}}.
$$
Let $\sigma$ be an arbitrary element of $G={\text{\rm Gal}}(K/\Q)$. We then have that $\eta_i^{\sigma}=\eta_i'$, $\zeta_i^{\sigma}=\zeta_i'$, where $\eta_i'$ and $\zeta_i'$
are roots of unity of order dividing $s_1$. Furthermore, $\alpha^{\sigma}\in \{\alpha,\beta\}$.
If $\alpha^{\sigma}=\alpha$, we then get
\begin{eqnarray}
\label{eq:conj}
|G_i^{\sigma}| & = & |\alpha^{R-\lfloor R/2\rfloor}(\alpha^k-\zeta_i')^{S}-\eta_i'\alpha^{-\lfloor R/2\rfloor}(\alpha-(-b)^k{\overline{\zeta_i'}})^S|\nonumber\\
& \le & \alpha^{(R+1)/2} (\alpha^k+1)^S+(\alpha^k+1)^S\nonumber\\
& \le &  2\alpha^{(R+1)/2} (\alpha+1)^{Sk}\le \alpha^{2+(9{\sqrt{m}}+1)/2+6{\sqrt{m}} k}\nonumber\\
& \le & \alpha^{11 {\sqrt{m}} k},
\end{eqnarray}
while if $\alpha^{\sigma}=\beta$, we also get
\begin{eqnarray*}
|G_i^{\sigma}| & = & |\beta^{R-\lfloor R/2\rfloor}(\beta^k-\zeta_i')^b-\beta^{-\lfloor R/2\rfloor }\eta_i'(\beta^k-(-b)^k{\overline{\zeta_i'}})^S|\nonumber\\
&\le & (\alpha^{-k}+1)^S+\alpha^{R/2}(\alpha^{-k}+1)^S\nonumber\\
& = &
\alpha^S+\alpha^{R/2+S}\le 2 \alpha^{R/2+S}\le \alpha^{2+4.5 {\sqrt{m}}+6{\sqrt{m}}} \\
& = & \alpha^{11 {\sqrt{m}} k}.
\end{eqnarray*}
In the above, we used the fact that $\alpha^{-k}+1\le \alpha^{-1}+1\le \alpha$.
In conclusion, inequality \eqref{eq:conj} holds for all $\sigma \in G$. Thus, if we write $G_i^{(1)},\ldots,G_i^{(d)}$ for the $d$ conjugates of $G_i$ in $K$, we then get that
$$
|\norm_{K/\Q}({\mathcal E}_i)|\le |\norm_{K/\Q}(E_i)|=|\norm_{K/\Q}(G_i)|\le \alpha^{11 d k {\sqrt{m}}},
$$
where the first inequality above follows because ${\mathcal E}_i$ divides $E_i$; hence $G_i$, and $E_i\ne 0$.
Multiplying the above inequalities for $i=1,\ldots,\ell$, we get that
\begin{eqnarray*}
E^{\ell} & = & |\norm_{K/\Q}(E)|=\left|\norm_{K/\Q}\left(E{\mathcal O}_{K}\right)\right|\le \left|\norm_{\K/\Q}\left(\prod_{i=1}^{\ell} {\mathcal E}_i\right)\right|\\
& \le & \left|\prod_{i=1}^{\ell} \norm _{K/\Q}({G}_i)\right|\le \alpha^{11 d\ell k {\sqrt{m}}},
\end{eqnarray*}
therefore
\begin{equation}
\label{eq:boundforE}
E\le \alpha^{11 k d {\sqrt{m}}}\le \alpha^{22 k \phi(s) {\sqrt{m}}}<\alpha^{22 k s {\sqrt{m}}}.
\end{equation}
In the above, we used that $d\le 2\phi(s)\le 2s$.

\medskip

We are now ready to estimate $A$.
We write
\begin{eqnarray*}
A_1 & := & \gcd(U_m, U_{n+k}^2-U_n^2);\\
A_2 & : = & \gcd(U_m, U_{n+k}^2+U_n^2);\\
A_3  & := & \gcd\left(U_m, \frac{U_{n+k}^6-U_n^6}{U_{n+k}^2-U_n^2}\right).
\end{eqnarray*}
Clearly, $A\le A_1 A_2 A_3$. We bound each of $A_1,~A_2,~A_3$. We first estimate $A_1$ and $A_2$ and deal with $A_3$ later. Write
\begin{eqnarray*}
U_n^2  &  = & \left(\frac{\alpha^n-\beta^n}{\alpha-\beta} \right)^2=\frac{\alpha^{2n}+2(-b)^n+\alpha^{-2n}}{(\alpha+b\alpha^{-1})^2};\\
U_{n+k}^2 &  = & \frac{\alpha^{2n+2k}+2(-b)^n (-b)^k+\alpha^{-2n-2k}}{(\alpha+b \alpha^{-1})^2}.
\end{eqnarray*}
Assume that $(-b)^k=1$. If $\zeta\in \{\pm i\}$, then $(\alpha^k-(-b)^k {\overline{\zeta}})/(\alpha^k-{\overline{\zeta}})=(\alpha^k+\zeta)/(\alpha^k-\zeta)$ is multiplicatively independent with $\alpha$ by Lemma \ref{lem:independence}.
The argument which lead to inequality~\eqref{eq:boundforE} shows that
\begin{equation}
\label{eq:estimateA2}
A_2\le \alpha^{11 k d_1 {\sqrt{m}}}\le \alpha^{44 k {\sqrt{m}}},
\end{equation}
where $d_1=4$ is the degree of the field ${\mathbb Q}(\alpha,i)$. To estimate $A_1$, we set $\gamma=-b\alpha^2$ and, using that $(-b)^k=1$, we find
\begin{eqnarray*}
U_{n+k}^2-U_n^2 & = & \frac{\alpha^{2n+2k}+\alpha^{-2n-2k}-\alpha^{2n}-\alpha^{-2n}}{(\alpha+b\alpha^{-1})^2}\\
& = & \alpha^{2-2n-k} \frac{(\gamma^{2n+k}-1)(\gamma^{k}-1)}{(\gamma-1)^2},\\
U_m & = & (-b \alpha)^{1-m} \left(\frac{\gamma^m-1}{\gamma-1}\right).
\end{eqnarray*}
In the ring of integers ${\mathcal O}={\mathcal O}_{K}$ of the quadratic field $K={\mathbb Q}(\alpha)$ consider the ideals
$$
\mathfrak{a}=\left(\frac{\gamma^m-1}{\gamma-1}, \frac{\gamma^{2n+k}-1}{\gamma-1}\right),\quad \mathfrak{b}=\left(\frac{\gamma^m-1}{\gamma-1},\frac{\gamma^k-1}{\gamma-1}\right).
$$
Clearly, $A_1\mid \mathfrak{a} \mathfrak{b}$, whence
$$
A_1^2= \norm_{{K/}{\mathbb Q}} (A_1)\le |\norm_{{K}/{\mathbb Q}}(\mathfrak{a})||\norm_{{K}/{\mathbb Q}}(\mathfrak{b})|.
$$
Clearly,
$$
|\norm_{{K}/{\mathbb Q}}(\mathfrak{b})|\le \left|\norm_{{K}/{\mathbb Q}}\left(\frac{(-b)^k\alpha^{2k}-1}{\alpha+b\alpha^{-1}}\right)\right|=|\norm_{{K}/{\mathbb Q}} (U_k)|<\alpha^{2k}.
$$
To estimate $|\norm_{{K}/{\mathbb Q}}(\mathfrak{a})|$, observe that $\mathfrak{a}=(\gamma^{d}-1)/(\gamma-1)$ by item 3 of Corollary \ref{cor:v_new}, where $d=\gcd(m,2n+k)$. Using the obvious inequality
$|\gamma^{-1}|\le 1/2$, we get that
$$
|\norm_{{\mathbb K}/{\mathbb Q}}(\mathfrak{a})|=\left|\frac{\gamma^{d}-1}{\gamma-1} \frac{\gamma^{-d}-1}{\gamma^{-1}-1}\right|\le 6 |\gamma|^{d}=6\alpha^{2d}<\alpha^{2d+4}.
$$
Hence,
${A_1\le \alpha^{d+k+2}}$.
It is important to note that $d\ne m$: otherwise we would have had  ${U_m\mid U_{n+k}^2-U_n^2}$, contradicting our hypothesis about the minimality of $s$.
Therefore $d$ is a proper divisor of $m$, showing that
\begin{equation}
\label{eq:estimateA1}
A_1\le \alpha^{m/2+k+2}.
\end{equation}
Thus, we have bounded $A_1$ and $A_2$ in the case $(-b)^k=1$.

The case ${(-b)^k=-1}$ can be treated analogously, but~$A_1$ and~$A_2$ swap roles. This time for $\zeta\in \{\pm 1\}$
the number $\frac{\alpha^k-(-b)^k {\overline{\zeta}}}{\alpha^k-\zeta} = \frac{\alpha^k+\zeta}{\alpha^k-\zeta}$ is multiplicatively independent of $\alpha$ by Lemma~\ref{lem:independence}, which implies the estimate
\begin{equation}
\label{eq:A1case2}
A_1\le \alpha^{22 k{\sqrt{m}}}.
\end{equation}
 Next, using that $(-b)^k=-1$, we find
 $$
 U_{n+k}^2+U_n^2=\alpha^{2-n-k} \frac{(\gamma^{2n+k}-1)(\gamma^k-1)}{(\gamma-1)^2},
 $$
 and arguing exactly as in the case $(-b)^k=1$, we get
 \begin{equation}
 \label{eq:A2case2}
 A_2\le \alpha^{m/2+k+2}.
 \end{equation}
 Hence, we get that both in case $(-b)^k=1$ and in case $(-b)^k=-1$, we have
 \begin{equation}
 \label{eq:A1A2}
 A_1A_2\le \alpha^{m/2+k+2+44k{\sqrt m}}.
 \end{equation}
Finally, for $A_3$, we note that by Lemma~\ref{lem:independence}, unless $\alpha=2+{\sqrt{3}}$, we have that $\frac{\alpha^k-(-b)^k {\overline{\zeta}}}{\alpha^k-\zeta}$ is multiplicatively independent of $\alpha$ for $\zeta\in \{\pm \omega, \pm \omega^2\}$.
Thus, writing
$$
A_{3,\pm }=\gcd(A_3, U_{n+k}^2\pm U_{n+k} U_n+U_n^2),
$$
we get, by arguing in the field ${\mathbb Q}(\alpha,e^{2\pi i/3})$ of degree $4$ as we did in order to prove inequality \eqref{eq:boundforE}, that
\begin{equation}
\label{eq:A3pm}
A_{3,\pm }\le \alpha^{44 k {\sqrt{m}}},
\end{equation}
which leads to
\begin{equation}
\label{eq:A3}
A_3\le A_{3,+} A_{3,-}\le  \alpha^{88 k {\sqrt{m}}}.
\end{equation}
So, let us assume that $(a,b,k)=(4,1,1)$, so $\alpha=2+{\sqrt{3}}$. Note that since $U_t\equiv t\pmod 2$, it follows that $U_{n+k}^s-U_n^s=U_{n+1}^s-U_n^s$ is odd and a multiple of $U_m$, therefore $m$ is odd.  For $\zeta\in \{\omega, \omega^2\}$, we have that $\frac{\alpha^k-(-b)^k {\overline{\zeta}}}{\alpha^k-\zeta}=\frac{\alpha-{\overline{\zeta}}}{\alpha-\zeta}$ are multiplicatively independent of $\alpha$, which leads, by the previous argument, to
\begin{equation}
\label{eq:A3p}
A_{3,+}\le \alpha^{44 k {\sqrt{m}}}.
\end{equation}
As for $A_{3,-}$, since
$$
U_{n+1}^2-U_{n+1} U_n+U_n^2=V_{2n+1}/4,
$$
we have that
$$
A_{3,-}\mid \gcd(U_m, V_{2n+1})=1,
$$
where the last equality follows easily from the fact that~$m$ is and ${2n+1}$ are both odd (see $(iii)$ of the Main Theorem in~\cite{McDaniel}). Together with~\eqref{eq:A3p}, we infer that inequality~\eqref{eq:A3} holds in this last case as well. Together with~\eqref{eq:A1A2}, we get that the inequality
\begin{equation}
\label{eq:boundforA}
A\le A_1A_2A_3\le \alpha^{m/2+k+2+132 k {\sqrt{m}}}
\end{equation}
holds in all instances.

\medskip

Inequality~\eqref{eq:lowup} together with estimates~\eqref{eq:boundforD},~\eqref{eq:boundforA} and~\eqref{eq:boundforE}, give
$$
\alpha^{m-2}\le U_m=DAE\le \alpha^{6s+3+\log m/\log \alpha+m/2+k+2+(132k+22ks) {\sqrt{m}}}.
$$
Since $s\ge 3$, we have $132+22s\le 66s$. Since also $1/\log \alpha<3$, we get
$$
m/2\le (6s+7+3\log m+k)+66s k{\sqrt{m}}.
$$
Since $m\ge 10000$, one checks that $6s+7+3\log m+k<ks{\sqrt{m}}$. Hence,
\begin{equation}
\label{eq:last}
m\le 134 ks {\sqrt{m}},
\end{equation}
which leads to the desired inequality \eqref{eq:mainthm_new}.

\section{Comment}

One may wonder if one can strengthen our main result Theorem \ref{thm:main} in such a way as to include also the instances $s\in \{1,2,4\}$ maybe at the cost of eliminating finitely many exceptions in the pairs $(a,k)$. The fact that this is not so
follows from the formulae:
\begin{itemize}
\item[$(i)$] $U_{n+k}-U_n=U_{n+k/2} V_{k/2}$ for all $n\ge 0$ when $b=1$ and $2\| k$;
\item[$(ii)$] $U_{n+k}+U_n=U_{n+k/2} V_{k/2}$  for all $n\ge 0$ when $b=1$ and $4\mid k$ or when $b=-1$ and $k$ is even;
\item[$(iii)$] $U_{n+k}^2+U_n^2=U_{2n+k} U_k$  for all $n\ge 0$ when $b=1$ and $k$ is odd,
\end{itemize}
which can be easily proved using the Binet formulas~\eqref{eq:Binet}. Thus, taking $m=n+k/2$ (for $k$ even) and $m=2n+k$ for $k$ odd and $b=1$, we get that divisibility \eqref{eq:Untos} always holds with some $s\in \{1,2,4\}$. We also note the ``near-miss'' $U_{4n+2}\mid 4(U_{n+1}^6-U_n^6)$ for all $n\ge 0$ if $(a,b,k)=(4,-1,1)$.

\section*{Acknowledgements}

This work was done during a visit of T.~K., A.~P. and P.~S. to the School of Mathematics of the Wits University in September 2015. They thank the School for hospitality and support and Kruger National Park for excellent working conditions. F.~L. thanks Professor Igor Shparlinski for some useful suggestions. Yu.~B. was partially supported by  Max-Planck-Institut f\"ur Mathematik at Bonn. T.~K. was partially supported by the Hubei provincial Expert Program in China. A.~P. was partly financed by Proyect DIUV-REG N$^{\circ}$~25-2013.

\end{document}